\documentclass[11pt]{amsart}
\usepackage{amssymb, amsmath, latexsym,   bm, mathrsfs}
\usepackage{multirow, xcolor, float}

%%%
%%%
%%%
%%%
%%%
%%%
%%%
%%%
%%%
%%%

\usepackage[shortlabels]{enumitem}
\usepackage{tasks}
\usepackage[numbers,sort&compress]{natbib}
\usepackage{soul}
\sodef\lightspacing{}{.09em}{.2em plus .2em}{.5em plus .1em minus .1em}

%%%
%%%
%%%
%%%
%%%
%%%
%%%
%%%
%%%
%%%

%\linespread{1.3}
\renewcommand{\baselinestretch}{\baselinestretch}
\renewcommand{\baselinestretch}{1.1}
\numberwithin{equation}{section}

\newtheorem{thm}{Theorem}[section]

\newtheorem{prop}[thm]{Proposition}

\newcommand{\Mod}[1]{\ (\mathrm{mod}\ #1)}

%%%
%%%
%%%
%%%
%%%
%%%
%%%
%%%
%%%
%%%

%\newcommand{\diag}{\operatorname{diag}}
%\newcommand{\gen}{\operatorname{gen}}

%\newcommand{\ord}{\operatorname{ord}}
%\newcommand{\rank}{\operatorname{rank}}

\newcommand{\sMod}[1]{\ (\mathrm{mod}\ #1)}

%%%
%%%
%%%
%%%
%%%
%%%
%%%
%%%
%%%
%%%

\begin{document}

\title[Sums of squares of integers except for a fixed one]{Sums of squares of integers except for a fixed one}

\author[Chae et al.]{Wonjun Chae}
\address{Department of Mathematical Sciences and Research Institute of Mathematics, Seoul National University, Seoul 08826, Korea}
\email{wonjun97@snu.ac.kr}
\thanks{This work of the first and the fifth authors was supported by the National Research Foundation of Korea(NRF) grant funded by the Korea government(MSIT)(NRF-2020R1A5A1016126) and (RS-2024-00342122).}

\author[]{Yun-Seong Ji}
\address{Research Institute of Mathematics, Seoul National University, Seoul 08826, Korea}
\email{ys0430@snu.ac.kr}
\thanks{This work of the second author was supported by the National Research Foundation of Korea(NRF) grant funded by the Korea government(MSIT)(NRF-2021R1I1A1A01043959) and (RS-2024-00342122).}

\author[]{Kisuk Kim}
\address{Department of Mathematical Sciences and Research Institute of Mathematics, Seoul National University, Seoul 08826, Korea}
\email{samsungkks@snu.ac.kr}

\author[]{Kyoungmin Kim}
\address{Department of Mathematics, Hannam University, Daejeon 34430, korea}
\email{kiny30@hnu.kr}

\author[]{Byeong-Kweon Oh}
\address{Department of Mathematical Sciences and Research Institute of Mathematics, Seoul National University, Seoul 08826, Korea}
\email{bkoh@snu.ac.kr}

\author[]{Jongheun Yoon}
\address{Charles University, Faculty of Mathematics and Physics, Department of Algebra, Sokolovská 83, 186 00 Praha 8, Czech Republic}
\email{jongheun.yoon@matfyz.cuni.cz}
\thanks{This work of the sixth author was supported by the National Research Foundation of Korea(NRF) grant funded by the Korea government(MSIT)(RS-2024-00342122) and grant 21-00420M from Czech Science Foundation (GA\v{C}R)}

\subjclass[2020]{Primary 11E20, 11E25}

\keywords{sums of squares}

\begin{abstract}
In this article, we study a sum of squares of integers except for a fixed one. For any nonnegative integer $n$, we find the minimum number of squares  of integers except for $n$ whose sums represent all positive integers that are represented by a sum of squares except for it. This problem could be considered as a generalization of Dubouis's result \cite{Du1911} for the case when $n=0$.
\end{abstract}

\maketitle

%%%%%%%%%%%%%%%%%%%%%%%%%%%%%%%%%%%%%%%%%%%%%%%%%%%%%%%%%%
%%%%%%%%%%%%%%%%%%%%%%%%%%%%%%%%%%%%%%%%%%%%%%%%%%%%%%%%%%
%%%%%%%%%%%%%%%%%%%%%%%%%%%%%%%%%%%%%%%%%%%%%%%%%%%%%%%%%%
%%%%%%%%%%%%%%%%%%%%%%%%%%%%%%%%%%%%%%%%%%%%%%%%%%%%%%%%%%
%%%%%%%%%%%%%%%%%%%%%%%%%%%%%%%%%%%%%%%%%%%%%%%%%%%%%%%%%%
%%%%%%%%%%%%%%%%%%%%%%%%%%%%%%%%%%%%%%%%%%%%%%%%%%%%%%%%%%
%%%%%%%%%%%%%%%%%%%%%%%%%%%%%%%%%%%%%%%%%%%%%%%%%%%%%%%%%%

\section{Introduction} \label{sec:intro}

%%%%%%%%%%%%%%%%%%%%%%%%%%%%%%%%%%%%%%%%%%%%%%%%%%%%%%%%%%
%%%%%%%%%%%%%%%%%%%%%%%%%%%%%%%%%%%%%%%%%%%%%%%%%%%%%%%%%%
%%%%%%%%%%%%%%%%%%%%%%%%%%%%%%%%%%%%%%%%%%%%%%%%%%%%%%%%%%
%%%%%%%%%%%%%%%%%%%%%%%%%%%%%%%%%%%%%%%%%%%%%%%%%%%%%%%%%%
%%%%%%%%%%%%%%%%%%%%%%%%%%%%%%%%%%%%%%%%%%%%%%%%%%%%%%%%%%
%%%%%%%%%%%%%%%%%%%%%%%%%%%%%%%%%%%%%%%%%%%%%%%%%%%%%%%%%%
%%%%%%%%%%%%%%%%%%%%%%%%%%%%%%%%%%%%%%%%%%%%%%%%%%%%%%%%%%

%\newdimen\ofontdimentwo%
%\newdimen\ofontdimensam%
%\newdimen\ofontdimennes%
%\newdimen\ofontdimenqil%
%\ofontdimentwo=\fontdimen2\font%
%\ofontdimensam=\fontdimen3\font%
%\ofontdimennes=\fontdimen4\font%
%\ofontdimenqil=\fontdimen7\font%
%\fontdimen2\font=3.3pt%
%\fontdimen3\font=1.65pt%
%\fontdimen4\font=1.1pt%
%\fontdimen7\font=1.1pt%
%\fontdimen2\font=\ofontdimentwo%
%\fontdimen3\font=\ofontdimensam%
%\fontdimen4\font=\ofontdimennes%
%\fontdimen7\font=\ofontdimenqil%

%논문 참고문헌 달기
In 1770, Lagrange proved in \cite{La} that any positive integer can be represented by a sum of at most four squares of integers. That is, for the quaternary quadratic form $f(x,y,z,w)=x^2+y^2+z^2+w^2$, the Diophantine equation 
$$
f(x,y,z,w)=x^2+y^2+z^2+w^2=n
$$
 has an integer solution for any positive integer $n$.
This famous Lagrange's four square theorem has been generalized in many directions. Ramanujan determined in \cite{R} that there are $55$ positive definite integral diagonal quadratic forms which represent all nonnegative integers. Such a quadratic form is called {\it universal}.
Dickson later pointed out in \cite{Di} that the form $x^2+2y^2+5z^2+5w^2$, which represents all positive integers except for $15$, was incorrectly included, leaving only $54$. Recently, Conway and Schneeberger proved, so called, the $15$-theorem, which says that any positive definite integral quadratic form representing $1,2,3,5,6,7,10,14$, and $15$ represents all nonnegative integers. Bhargava provided in \cite{Bh} a simple and elegant method to prove
 the $15$-theorem.

In 1911, Dubouis determined in \cite{Du1911}   all positive integers that are not represented by sums of $k$ \textit{nonzero} squares for any $k \geq 4$. Following by Dubouis's result, any positive integer is represented by $k$ nonzero integers except for the integers in $E_k$, where
\begin{align*}
     E_4&=\{1,2,3,5,6,8,9,11,14,17,29,41\} \cup \{2\cdot4^m, 6\cdot 4^m, 14\cdot 4^m \mid m \geq0\},\\ 
    E_5&=\{1,2,3,4,6,7,9,10,12,15,18,33\},
    \intertext{and}
    E_k&=\{1,2,\dots, k-1, k+1, k+2, k+4, k+5, k+7, k+10, k+13 \}  
\end{align*} 
for any $k \geq 6$.
 As a generalization, Ji, Kim,
 and Oh determined in \cite{jko2015} all positive definite binary quadratic forms that are represented by  sums of $k$ nonvanishing squares for any integer $k\ge 5$.  In \cite{KK2017}, Kim and Kim extend the Dubouis's results to the real quadratic field $\mathbb{Q}(\sqrt{m})$. In fact, they proved that for any integer $k \geq 5$, there exists a bound $C(m,k)$ such that any totally positive integer in the real quadratic field $\mathbb{Q}(\sqrt{m})$ whose norm exceeds $C(m,k)$ can be represented as a sum of $k$ nonzero integral squares in $\mathbb{Q}(\sqrt{m})$.  In \cite{KO2022}, Kim and Oh extend the Dubouis's results in another way. They determined $S(p)$ for a prime $p$ that is the smallest number $k$ such that any positive integer is a sum of at most $k$ squares of integers that are not divisible by $p$.

In this article, we extend Dubouis's result in a different direction. To explain our result, we introduce some useful notation.
 For any positive integer $\rho$, we define 
$$
S_{\rho}=\{ n \in \mathbb Z \mid n\ge 0 \ \text{and} \ n\ne \rho\}.
$$
For any positive integer $n$, we define %Omeara 표기 참고 적기
\[
    \Sigma_\rho(n)=\{m \mid \exists x_1,x_2,\dots,x_m \in S_{\rho} \ 
 \text{such that} \ n=x_1^2+\cdots+x_m^2\}. 
\]
If $\Sigma_\rho(n)$ is nonempty, we define
\[
k_{\rho}(n)=\min\Sigma_\rho(n).      
\]
When $\Sigma_\rho(n)$ is empty for given $\rho$ and $n$, we define $k_{\rho}(n)=\infty$. We also define
\[
    I(\rho)=\{n \mid k_{\rho}(n)=\infty\} \text{ and } M(\rho)=\max\{k_{\rho}(n) \mid n\not\in I(\rho) \}.
\]

The aim of this article is to determine $M(\rho)$ for any positive integer $\rho$. More precisely, we prove that\[
 M(\rho) = \begin{cases}
\ 8 & \text{if \ $\rho=2$,}\\
\ 6 & \text{if \ $\rho=1$, $3$,}\\
\ 5 & \text{if \ $\rho=5,\ 2^{m+1},\ 3\cdot2^m $ for some positive integer $m$,}\\
\ 4 & \text{otherwise.}\\
 \end{cases}
\]
Furthermore, we determine all positive  integers $\rho$ and $n$ such that $k_{\rho}(n) \ge 5$.

%%%%%%%%%%%%%%%%%%%%%%%%%%%%%%%%%%%%%%%%%%%%%%%%%%%%%%%%%%
%%%%%%%%%%%%%%%%%%%%%%%%%%%%%%%%%%%%%%%%%%%%%%%%%%%%%%%%%%
%%%%%%%%%%%%%%%%%%%%%%%%%%%%%%%%%%%%%%%%%%%%%%%%%%%%%%%%%%
%%%%%%%%%%%%%%%%%%%%%%%%%%%%%%%%%%%%%%%%%%%%%%%%%%%%%%%%%%
%%%%%%%%%%%%%%%%%%%%%%%%%%%%%%%%%%%%%%%%%%%%%%%%%%%%%%%%%%
%%%%%%%%%%%%%%%%%%%%%%%%%%%%%%%%%%%%%%%%%%%%%%%%%%%%%%%%%%
%%%%%%%%%%%%%%%%%%%%%%%%%%%%%%%%%%%%%%%%%%%%%%%%%%%%%%%%%%

\section{Large integers that are sums of 4 squares of integers except for a fixed one}

%%%%%%%%%%%%%%%%%%%%%%%%%%%%%%%%%%%%%%%%%%%%%%%%%%%%%%%%%%
%%%%%%%%%%%%%%%%%%%%%%%%%%%%%%%%%%%%%%%%%%%%%%%%%%%%%%%%%%
%%%%%%%%%%%%%%%%%%%%%%%%%%%%%%%%%%%%%%%%%%%%%%%%%%%%%%%%%%
%%%%%%%%%%%%%%%%%%%%%%%%%%%%%%%%%%%%%%%%%%%%%%%%%%%%%%%%%%
%%%%%%%%%%%%%%%%%%%%%%%%%%%%%%%%%%%%%%%%%%%%%%%%%%%%%%%%%%
%%%%%%%%%%%%%%%%%%%%%%%%%%%%%%%%%%%%%%%%%%%%%%%%%%%%%%%%%%
%%%%%%%%%%%%%%%%%%%%%%%%%%%%%%%%%%%%%%%%%%%%%%%%%%%%%%%%%%

In this section, we show in Theorem~\ref{550} that for any positive integer $\rho$, any sufficiently large integer can be represented by a sum of at most four squares of integers in $S_\rho$. Also, by using Theorem~\ref{550}, we determine the values of $M(\rho)$ when $1 \le \rho \le 5$.

One may easily show that there are exactly $12$ positive integers which are not a sum of squares of integers greater than $1$. In fact, they are
\[I(1)=\{1,2,3,5,6,7,10,11,14,15,19,23\}\text.\]
For any $\rho\geq2$, since $1 \in S_{\rho}$ and 
$$
n=\overbrace{1^2+1^2+\cdots+1^2}^{n-\text{times}},
$$
 for any positive integer $n$, we have  $I(\rho)=\varnothing$.

\begin{thm}\label{550} Let $\rho$ be any positive integer. Any integer $n$ with $n \geq 550\rho^{2}$ is a sum of at most four squares of integers in $S_{\rho}$.
\end{thm}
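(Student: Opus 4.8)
The plan is to peel one coordinate off and reduce to a ternary problem. I would write $n=a^{2}+m$ with $m=n-a^{2}$ and try to choose $a\in S_{\rho}$ with $a$ of size $O(\rho)$ so that (i) $m$ is large, say $m\ge c\rho^{2}$, (ii) $m\not\equiv 0,4,7\pmod 8$, so that by the three–square theorem $m$ is a sum of three squares, and (iii) $m$ admits such a representation in which the value $\rho$ does not occur. Since the square $a^{2}$ runs through the residues $0,1,4\pmod 8$, for most residues of $n$ one of a few choices of $a$ already forces $m$ into the admissible set $\{1,2,3,5,6\}\pmod 8$ while keeping $a$ small and $m\ge c\rho^{2}$; the value $a=\rho$ is simply skipped, which is harmless because we only need one good $a$ out of several.

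For the stubborn residues $n\equiv 0,4\pmod 8$ I would use a scaling reduction. Writing $n=4n'$ and representing $n'=\sum_{i} y_{i}^{2}$ by Lagrange's theorem, the doubled tuple $(2y_{i})$ represents $n$ by even integers. If $\rho$ is odd this is already a representation in $S_{\rho}$, since every summand is even and hence $\neq\rho$; if $\rho$ is even it reduces the problem for $(n,\rho)$ to the problem for $(n',\rho/2)$, and one checks that the hypothesis $n\ge 550\rho^{2}$ descends to $n'\ge 550(\rho/2)^{2}$ intact. Iterating strips off the power of $2$ dividing $n$ together with that dividing $\rho$, so it suffices to treat $n\not\equiv 0\pmod 4$, which is exactly where the congruence steering above applies.

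The heart of the matter is the ternary statement that \emph{every $m$ with $m\not\equiv 0,4,7\pmod 8$ and $m\ge c\rho^{2}$ is a sum of three squares none of which equals $\rho$}. I would prove it by producing a representation all of whose parts exceed $\rho$, which clearly avoids $\rho$. The largest part of any three–square representation of such an $m$ already satisfies $y_{1}\ge\sqrt{m/3}>\rho$ once $c>3$, so only the two smaller parts are genuinely constrained. I would start from a representation of $m$ as three positive squares (available for large admissible $m$, the obstructions being the classical families $4^{k}(8\ell+7)$ and $4^{k}\{1,2,5,10,13,\dots\}$, both excluded by the size and congruence hypotheses) and then push it toward a balanced one: concretely, subtract a controlled square $y_{1}^{2}$ with $\rho<y_{1}$ chosen so that $m-y_{1}^{2}$ has a two–square representation $s^{2}+t^{2}$ with $s,t>\rho$ (the special shape $m-y_{1}^{2}=2k^{2}$, giving $k^{2}+k^{2}$, is the cleanest instance). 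Bounding how large $m$ must be, relative to $\rho^{2}$, for such a $y_{1}$ to exist in the available window is exactly where the factor $\rho^{2}$, and ultimately the explicit constant $550$, enters.

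The main obstacle is this last balancing step: guaranteeing by elementary means, and with an explicit uniform constant, that a large admissible $m$ has a three–square representation missing the single prescribed value $\rho$. A soft route is to compare counts—the number of three–square representations of $m$ through the value $\rho$ is at most $3\,r_{2}(m-\rho^{2})=O(m^{\varepsilon})$, whereas the total number grows—but this is ineffective and yields no clean constant, so I expect the real work to be an explicit case analysis over the residue of $m$ and the size of $\rho$, arranging in each case a concrete representation with all parts above $\rho$. Once the ternary lemma is in hand, combining it with the reduction of the first two paragraphs and choosing $a$ (hence $c$) generously delivers the stated bound $n\ge 550\rho^{2}$.
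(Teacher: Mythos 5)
Your outer reduction is sound and parallels the paper: the scaling step for $4\mid n$ (stripping powers of $2$ from $n$ and $\rho$ simultaneously) is exactly how the paper handles that case, and peeling off one small square to steer the residue class is harmless. The genuine gap is the ternary lemma you correctly identify as ``the heart of the matter'': that every admissible $m\ge c\rho^2$ has a representation by three squares none of which equals $\rho$ (or, stronger, all of which exceed $\rho$). Neither of the routes you sketch closes it with an explicit constant. The counting route compares $6\,r_2(m-\rho^2)=O(m^{\varepsilon})$ against $r_3(m)$, but the needed lower bound $r_3(m)\gg m^{1/2-\varepsilon}$ is a class-number bound and is ineffective (Siegel), so no constant like $550$ can be extracted. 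The balanced route --- a three-square representation with all parts in a cap of size $O(\rho)$ below $\sqrt{m/3}$, or the concrete device of finding $y_1$ in a prescribed window with $m-y_1^2=2k^2$ or with $m-y_1^2$ a sum of two large squares --- is essentially an equidistribution statement for lattice points on spheres; it is not accessible by the elementary means available here, and the Pell-type condition $m-y_1^2=2k^2$ simply fails for most $m$. So the proof as proposed does not go through.

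The paper avoids the ternary problem entirely by balancing \emph{four} squares with a linear constraint, which is where an extra variable makes all the difference. For $n\equiv 2\Mod 4$ it picks $m$ with $4m^2<n<4(m+1)^2$ and writes $n-4m^2=\alpha^2+\beta^2+\gamma^2+\delta^2$ with $\alpha+\beta+\gamma+\delta=0$ (i.e.\ a representation by $x^2+y^2+z^2+(x+y+z)^2$, available for every positive integer $\equiv 2\Mod 4$); then $n=(m+\alpha)^2+\cdots+(m+\delta)^2$ with each $|\alpha|,\dots,|\delta|<\sqrt{8m+4}$, so all four parts exceed $\rho$ once $m>\rho+4+2\sqrt{2\rho+5}$, which is what produces the constant $550$. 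For odd $n$ the same is done with the constraint $\alpha+\beta+\gamma+\delta=1$ via Cauchy's lemma. If you want to salvage your outline, replace the ternary balancing lemma by this constrained quaternary representation; the one-variable peeling then becomes unnecessary.
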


\begin{proof} Let $n$ be an integer satisfying the above inequality. 
    First, suppose that $n\equiv2\Mod4$. Choose a nonnegative integer $m$ such that
    \[
        4m^2< n<4(m+1)^2.
    \]
    Since $n-4m^2\equiv 2\Mod4$, it is represented by the ternary quadratic form $x^2+y^2+z^2+(x+y+z)^2$. So there are integers $\alpha,\beta,\gamma$, and $\delta$ such that
    \[\left\{\begin{aligned}
        &\ \alpha^2+\beta^2+\gamma^2+\delta^2=n-4m^2,\\
        &\ \alpha+\beta+\gamma+\delta=0.
    \end{aligned}\right.\]
    Therefore we have
    \[
        n=(m+\alpha)^2+(m+\beta)^2+(m+\gamma)^2+(m+\delta)^2.
    \]
    Note that $n-4m^2<8m+4$. Hence, we have
    \[
        -\sqrt{8m+4}<\alpha,\beta,\gamma,\delta<\sqrt{8m+4}\text.
    \] 
    This implies that $n=x^2+y^2+z^2+w^2$ has an integer solution $(x,y,z,w)=(a,b,c,d)$ such that 
    $$
    m-\sqrt{8m+4}<a,b,c,d<m+\sqrt{8m+4}.
    $$
    Hence, if $n$ is sufficiently large so that $m-\sqrt{8m+4}>\rho$, then all of $a,b,c,$ and $d$ are greater than $\rho$. Note that $m-\sqrt{8m+4}>\rho$ if and only if  $m>\rho+4+2\sqrt{2\rho+5}$. Assume that an integer $n$ satisfies
    \begin{align}
    \tag{$\ast$} n>4(\rho+5+2\sqrt{2\rho+5})^2=4\rho^2+72\rho+(16\rho+80)\sqrt{2\rho+5}+180.
    \end{align}
    Since
    $$
    4(m+1)^2>n>4(\rho+4+2\sqrt{2\rho+5}+1)^2,
    $$
   we have $m>\rho+4+2\sqrt{2\rho+5}$. Therefore if an integer $n$ satisfies the condition ($\textasteriskcentered$), 
    then we may find integers $a,b,c$, and $d$ such that 
$$
n=a^2+b^2+c^2+d^2\ \text{and} \ a,b,c,d \in S_{\rho}.
$$
    Since as a function of the variable $\rho$, 
    $$
    \frac{4(\rho+5+2\sqrt{2\rho+5})^2}{\rho^2}
    $$
    is decreasing if $\rho>0$, we have
    \[
        550 >4(6+2\sqrt7)^2 \ge \frac{4(\rho+5+2\sqrt{2\rho+5})^2}{\rho^2},
    \]
for any positive integer $\rho$. Therefore any integer $n$ with  $n \ge 550\rho^2$ satisfies the condition ($\textasteriskcentered$).

    Now, assume that $n\equiv1\Mod2$. First, choose a nonnegative integer $m$ such that \begin{equation*}
        4m^2+2m< n<4(m+1)^2+2(m+1)\text{.}
    \end{equation*}
    Consider the system of Diophantine equations
    \[\left\{\begin{aligned}
    &\ p^2+q^2+r^2+s^2=n-(4m^2+2m),\\
    &\ p+q+r+s=1.
    \end{aligned}\right.\]
Since $1<4(n-4m^2-2m)$, we may apply Cauchy's lemma in \cite{Ca1815}(see also \cite{N}) to obtain an integer solution  $(p,q,r,s)=(\alpha,\beta,\gamma,\delta)$ to the above system of equations. Then we have
\begin{align*}
     n&=4m^2+2m+\alpha^2+\beta^2+\gamma^2+\delta^2\\&=(m+\alpha)^2+(m+\beta)^2+(m+\gamma)^2+(m+\delta)^2.
\end{align*} 
Note that
\[
    n-4m^2-2m<4(m+1)^2+2(m+1)-4m^2-2m=8m+6\text,
\] which implies that
\[
    -\sqrt{8m+6}<\alpha,\beta,\gamma,\delta<\sqrt{8m+6}\text.
\]
Therefore the equation $n=x^2+y^2+z^2+w^2$ has an integer solution $(x,y,z,w)=(a,b,c,d)$ such that 
$$
m-\sqrt{8m+6}<a,b,c,d<m+\sqrt{8m+6}.
$$
Hence, if $n$ is sufficiently large so that $m-\sqrt{8m+6}>\rho$, then all of $a,b,c,$ and $d$ are greater than $\rho$. Note that $m-\sqrt{8m+6}>\rho$ if and only if $m>\rho+4+\sqrt{8\rho+22}$. Thus, for any integer $n$ such that
\begin{equation}
   \tag{$\ast\ast$}
\begin{aligned}
    n&> 4(\rho+5+\sqrt{8\rho+22})^2+2(\rho+5+\sqrt{8\rho+22})\\
    &=4\rho^2+74\rho+(8\rho+42)\sqrt{8\rho+22}+198,
\end{aligned}
\end{equation}
we may find integers $a,b,c,$ and $d$ such that
$$
n=a^2+b^2+c^2+d^2\ \text{and} \ a,b,c,d \in S_{\rho}.
$$
Since as a function of the variable $\rho$, 
$$
\frac{4(\rho+5+\sqrt{8\rho+22})^2+2(\rho+5+\sqrt{8\rho+22})}{\rho^2}
$$
is decreasing if $\rho>0$, we have
$$
550>4(6+\sqrt{30})^2+2(6+\sqrt{30})\geq \frac{4(\rho\!+\!5\!+\!\sqrt{8\rho+22})^2+2(\rho\!+\!5\!+\!\sqrt{8\rho+22})}{\rho^2},
$$
for any positive integer $\rho$.
 Therefore any integer $n$ with  $n \ge 550\rho^2$ satisfies the condition ($\textasteriskcentered\textasteriskcentered$).

Finally, assume that $n\equiv0\Mod4$. Denote $\rho=2^k\rho'$, where $k\geq0$ and $\rho'$ is an odd integer. Let $n=4^el$, where $e\geq1$ and $l\not\equiv0\Mod4$. By Lagrange's four square theorem, there are integers $a,b,c,$ and $d$ such that $l=a^2+b^2+c^2+d^2$.
Hence, we have \begin{equation*}
    n=(2^ea)^2+(2^eb)^2+(2^ec)^2+(2^ed)^2.
\end{equation*}
If $e>k$, then all of $2^ea,2^eb,2^ec,$ and $2^ed$ are not equal to $\rho$.

Suppose that $e \leq k$. If $l\ge 550(2^{k-e}\rho')^2$, then by the above arguments for any integer not divisible by $4$, we may find integers   $a',b',c'$, and $d'$   such that 
$$
l=(a')^2+(b')^2+(c')^2+(d')^2 \ \text{and} \ a',b',c',d' \in S_{2^{k-e}\rho'}.
$$ 
 Hence, if $n \ge 550\rho^2$, then we have \begin{equation*}
    n=4^el=(2^ea')^2+(2^eb')^2+(2^ec')^2+(2^ed')^2 \ \text{and} \ 2^ea', 2^eb', 2^ec', 2^ed' \in S_{\rho}.
\end{equation*} 
 The theorem follows directly from this.  
\end{proof}

\begin{thm} Any integer which is a sum of squares of integers greater than $1$ is a sum of at most six squares of integers greater than $1$ and hence  $M(1)=6$.
\end{thm}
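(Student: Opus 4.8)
The plan is to establish the upper bound $k_1(n)\le 6$ for every representable $n$ and then to exhibit a single $n$ with $k_1(n)=6$. Since $S_1=\{0,2,3,4,\dots\}$ and a summand $0^2$ may always be deleted, $k_1(n)$ equals the least number of squares of integers $\ge 2$ whose sum is $n$, and the representable $n$ are exactly $\mathbb{Z}_{>0}\setminus I(1)$. As $I(1)\subseteq\{1,\dots,23\}$, every $n\ge 24$ is representable, while the representable $n\le 23$ are $4,8,9,12,13,16,17,18,20,21,22$, each visibly a sum of at most four squares $\ge 2$.

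For $n\ge 550$ I would simply invoke Theorem~\ref{550} with $\rho=1$: such $n$ is a sum of at most four elements of $S_1$ squared, hence, after deleting zeros, of at most four squares of integers $\ge 2$. This reduces the whole problem to the finite range $24\le n\le 549$, which is the step I expect to be the main obstacle.

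To treat this range I would reduce to the three--squares theorem. For each $n$ one selects a square $s^2$ with $2\le s$ and $s^2\le n$ so that $n-s^2$ is not of the excluded form $4^a(8b+7)$; then $n-s^2=u^2+v^2+w^2$ and $n=s^2+u^2+v^2+w^2$ uses at most four squares. The genuine difficulty is that some of $u,v,w$ may equal $1$, which is not permitted: zeros can be dropped, but a part equal to $1$ must be eliminated. I would absorb the (at most three) ones by merging them with a sufficiently large square among $s^2,u^2,v^2,w^2$ and re-expressing that enlarged value as a short sum of squares $\ge 2$; since at most four squares are present before merging, the total stays within six. Guaranteeing that the enlarged value is itself a sum of few squares $\ge 2$ is exactly where small cases resist a uniform formula, so in practice this range is settled by a finite, residue-class-by-residue-class verification, conveniently organized modulo $8$, because a square of an integer $\ge 2$ lies in $\{0,1,4\}\pmod 8$.

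Finally, for the lower bound I would take $n=39$. It is representable, since $39=3^2+3^2+3^2+2^2+2^2+2^2$, so $k_1(39)\le 6$. Conversely $39\equiv 7\pmod 8$, and since an even square is $\equiv 0$ or $4$ and an odd square is $\equiv 1\pmod 8$, in any representation of $39$ by squares $\ge 2$ the number of odd squares is $\equiv 3\pmod 4$; as seven odd squares already exceed $39$, there are exactly three, and since each is $\ge 9$ with $9+9+25>39$ they must be $9,9,9$, leaving $12=4+4+4$. Hence every representation uses six squares, so $k_1(39)=6$ and $M(1)\ge 6$; together with the upper bound this yields $M(1)=6$.
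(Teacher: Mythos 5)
Your proposal is correct and follows essentially the same route as the paper: invoke Theorem~\ref{550} with $\rho=1$ for $n\ge 550$ and settle the remaining finite range by direct verification, which is exactly what the paper does (it tabulates $k_1(n)$ for $n<550$, finding the value $6$ at $n=39$ and $55$). Your explicit mod-$8$ argument that $k_1(39)=6$ is a nice self-contained justification of the lower bound that the paper leaves to computation, but it does not change the structure of the proof.
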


\begin{proof}
By Theorem~\ref{550}, any integer greater than or equal to $550$ is a sum of at most four squares of integers greater than $1$. For any integer less than $550$, one may directly compute that \[
 k_1(n) = \begin{cases}
\ \infty & \text{if \ $n=1,2,3,5,6,7,10,11,14,15,19,23$,}\\
\ 6 & \text{if \ $n=39,55$,}\\
\ 5 & \text{if \ $n=30,35,46,51$,}\\
 \end{cases}
\]
and $k_1(n)\leq 4$ for all the other integers not given in the above. This completes the proof.
\end{proof}

In the same way, one may easily prove by using Theorem~\ref{550} that 
\begin{equation*}
    M(2) = 8\text{, } M(3) = 6\text{, } M(4) = 5  \text{, and } M(5) = 5\text.
\end{equation*}
Furthermore, we have
\[
 k_2(n) = \begin{cases} 
\  8 & \text{if \ $n=8,24$,}\\
\  7 & \text{if \ $n=7,15,23,31$,}\\
\  6 & \text{if \ $n=6,14,22,30$,}\\
\  5 & \text{if \ $n=5,13,21,29,40,56,120,184,$}\\
 \end{cases}
\]
and $k_2(n)\leq 4$ for all the other integers not given in the above.
For the case when $\rho=3$, we have
\[
k_3(n) = \begin{cases}
\  6 & \text{if \ $n=15$,}\\
\  5 & \text{if \ $n=11,14,23,35,47,59,71,95$,}\\
 \end{cases}
\]
and $k_3(n)\leq 4$ for all the other integers not given in the above.
Furthermore,
\[
k_4(n) = 5 \quad \text{if \ $ n=24,32,56,88,96,120,160,224,480,736,$}
\]
and $k_4(n)\leq 4$ for all the other integers not given in the above.
Finally, $k_5(79) =5$ and $k_5(n)\leq 4$ for any integer $n\neq 79$.

    %By proposition 1.3, for any $n \geq 550\cdot(2)^2=2200$, $n$ is a sum of at most $4$ squares of integers contained in $S_{2}$. So, the proof follows from direct computation for positive integers less than $2200$. In fact, $8,24$ need exactly $8$ nonzero squares, $7, 15, 23, 31$ need exactly $7$ nonzero squares, $6, 14, 22, 30$ need exactly $6$ nonzero squares, $5, 13, 21, 29, 40, 56, 120, 184$ need exactly $5$ nonzero squares, and the rest needs at most $4$ nonzero squares.

%%%%%%%%%%%%%%%%%%%%%%%%%%%%%%%%%%%%%%%%%%%%%%%%%%%%%%%%%%
%%%%%%%%%%%%%%%%%%%%%%%%%%%%%%%%%%%%%%%%%%%%%%%%%%%%%%%%%%
\section{Complete determination of $M(\rho)$}
%%%%%%%%%%%%%%%%%%%%%%%%%%%%%%%%%%%%%%%%%%%%%%%%%%%%%%%%%%
%%%%%%%%%%%%%%%%%%%%%%%%%%%%%%%%%%%%%%%%%%%%%%%%%%%%%%%%%%

Throughout this section, we always assume that $\rho\geq 6$. We prove that $M(\rho)=4$ if $\rho$ is divisible by $9$ or if it has a prime divisor greater than $3$, and $M(\rho)=5$ otherwise.

\begin{thm}
    If $\rho$ has a prime divisor greater than $3$, then $M(\rho)=4$.
\end{thm}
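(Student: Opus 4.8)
The inequality $M(\rho)\ge 4$ is immediate from a single witness: the integer $7$ is not a sum of three squares, yet $7=2^2+1^2+1^2+1^2$ with $1,2\ne\rho$ because $\rho\ge 6$, so $k_\rho(7)=4$. The substance is therefore the upper bound $k_\rho(n)\le 4$ for every positive integer $n$, which I would establish by partitioning the range of $n$. For $n\ge 550\rho^2$ Theorem~\ref{550} applies directly. For $n<\rho^2$ the claim is automatic, since in any Lagrange decomposition $n=a^2+b^2+c^2+d^2$ each part is at most $\sqrt n<\rho$ and hence lies in $S_\rho$. This leaves the middle range $\rho^2\le n<550\rho^2$, where $\rho$ can occur as a part and genuinely has to be avoided.

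In the middle range my plan is to peel off one large square and reduce to a three-square problem. Pick the largest $a\ne\rho$ with $a^2\le n$ and set $m=n-a^2$. The key observation is that as soon as $m<\rho^2$ the remainder automatically avoids $\rho$, so it suffices to arrange that $m$ be a sum of three squares, i.e. $m\ne 4^{j}(8k+7)$. Decreasing $a$ by $1$ shifts $m$ by the odd amount $2a-1$, so a short search over admissible top squares controls $m$ modulo a suitable power of $2$ and produces such an $m$; this handles every $n$ for which some admissible $a$ forces $n-a^2<\rho^2$. A direct estimate shows that the remainder $n-a^2$ can indeed be pushed below $\rho^2$ throughout $\rho^2\le n<550\rho^2$ once $\rho$ exceeds an explicit constant, so this argument settles all but finitely many $\rho$; the remaining small $\rho\ge 6$ are disposed of by a finite computation over $\rho^2\le n<550\rho^2$, exactly as for $\rho\le 5$ in Section~2.

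The crux is the complementary case, occurring near the top of the middle range, in which no admissible top square brings the remainder below $\rho^2$, so one must represent some $m\ge\rho^2$ as three squares while still avoiding the single value $\rho$. This is precisely where the hypothesis that $\rho$ has a prime divisor $p\ge 5$ enters, and I expect it to be the hard part. The goal is an avoidance lemma: every $m\ge\rho^2$ that is a sum of three squares has a representation $x^2+y^2+z^2$ with $x,y,z\ne\rho$. One route is to re-route a forbidden representation $m=\rho^2+y^2+z^2$ into another one using the arithmetic flexibility supplied by the factor $p\ge 5$; another is to compare the total count of three-square representations of $m$ with the $O(r_2(m-\rho^2))$ of them that use $\rho$. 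Either way the factor $p\ge 5$ is what breaks the rigidity possessed by the shapes $\rho=2^{m+1}$ and $\rho=3\cdot 2^m$; for those the same step fails and yields an $n$ with $k_\rho(n)=5$, which is exactly why $M(\rho)=5$ there and $M(\rho)=4$ here.
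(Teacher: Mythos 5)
Your proposal has a genuine gap at exactly the place you yourself flag as ``the crux'': the avoidance lemma asserting that every relevant $m\ge\rho^2$ that is a sum of three squares admits a representation $x^2+y^2+z^2$ with $x,y,z\ne\rho$ is never proved. You offer two possible routes (re-routing a forbidden representation using the factor $p\ge 5$, or comparing the count of three-square representations of $m$ with the $O(r_2(m-\rho^2))$ of them that use $\rho$) but carry out neither, and neither is routine: the counting route needs a uniform lower bound on $r_3(m)$ (of Gauss--Siegel type), which is far from elementary and must still be made uniform in $\rho$, while the re-routing route is precisely where a new idea would be required. The earlier steps are also looser than they look: in the range $\rho^2\le n<550\rho^2$, producing an admissible top square $a\ne\rho$ with $n-a^2<\rho^2$ \emph{and} $n-a^2$ not of the form $4^j(8k+7)$ requires an actual argument (several candidate values of $a$, control of $n-a^2$ modulo $8$ and modulo powers of $4$), and the pigeonhole only kicks in once $\rho$ exceeds a constant of order $2\sqrt{550}$, leaving a finite but nontrivial computation for the remaining $\rho$. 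As written, the middle range is a plan rather than a proof.

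For comparison, the paper's argument is entirely different and essentially one line: since $\rho$ is divisible by some prime $p\ge 5$, any integer not divisible by $p$ is automatically different from $\rho$, so the theorem of \cite{KO2022} --- every positive integer is a sum of at most four squares of integers not divisible by $p$, with the single exception $(p,n)=(5,79)$ --- immediately yields $k_\rho(n)\le 4$ for all $n\ne 79$; the exception is settled by $79=1^2+2^2+5^2+7^2$, since $\rho\ge 6$ together with $5\mid\rho$ forces $\rho\ge 10$. Your instinct that the hypothesis ``some prime $p\ge 5$ divides $\rho$'' must supply arithmetic flexibility is right in spirit, but that flexibility is already packaged in the cited result; recognizing the reduction ``not divisible by $p$ implies not equal to $\rho$'' is the whole proof, and it sidesteps the avoidance lemma entirely.
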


\begin{proof}
 In \cite{KO2022},  it was shown that any integer $n$ is a sum of at most $4$ squares of integers not divisible by a prime $p$ for any $p\geq 5$, except for the case when $p=5$ and $n=79$.  In the exceptional case,
 since
 \begin{equation*}
    79=1^2+2^2+5^2+7^2=2^2+5^2+5^2+5^2=3^2+3^2+5^2+6^2,
\end{equation*}
and we are assuming that $\rho \ge 6$, we have $M(\rho)=4$.
\end{proof}

Now, we consider the case where $\rho$ has no prime divisor greater than $3$.

\begin{thm}\label{3.2} Let $a$ be an integer greater than or equal to $2$ and let $\rho=2^{a+1}$. Then any positive integer $n$ except for the integers in 
\[
N(\rho)=\left\lbrace 2\rho^2, 6\rho^2, 10\rho^2, 14\rho^2, 30\rho^2, 46\rho^2,6\left(\frac{\rho}{2}\right)^2\!\!,  14\left(\frac{\rho}{2}\right)^2\!\!,  22\left(\frac{\rho}{2}\right)^2\!\!, 30\left(\frac{\rho}{2}\right)^2 \right\rbrace
\]
is a sum of $4$ squares of integers in $S_{\rho}$. Furthermore, any integer in $N(\rho)$ is a sum of $5$ squares of integers in $S_{\rho}$ and hence $M(\rho)=5$.
\end{thm}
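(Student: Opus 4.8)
The plan is to prove the statement by induction on $a$, built around a single parity reduction that reflects the self-similar shape of $N(\rho)$. Write $\rho=2^{a+1}$ throughout and call a representation $n=x_1^2+x_2^2+x_3^2+x_4^2$ \emph{admissible} if every $x_i\neq\rho$; since $0\in S_\rho$, having an admissible representation is exactly the same as being a sum of at most four squares in $S_\rho$. I will use three elementary reductions. First, if $n<\rho^2$ then no $x_i$ can equal $\rho$, so any Lagrange representation is admissible. Second, if $n\equiv4\pmod8$ then $n$ is a sum of four \emph{odd} squares: writing an odd square as $8T_j+1$ with $T_j$ the $j$-th triangular number, this amounts to expressing $(n-4)/8$ as a sum of four triangular numbers, which is possible by Gauss's three-triangular-number theorem, and odd roots are automatically $\neq\rho$. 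Third, if $8\mid n$ then a short computation modulo $8$ shows that in any representation all four $x_i$ are even (two odd terms would give $n\equiv2\pmod4$ and four odd terms $n\equiv4\pmod8$), so $x_i=2y_i$ and $n=4\sum y_i^2$ with $x_i\neq\rho\iff y_i\neq\rho/2$.

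The third reduction yields, for every $n$ with $8\mid n$,
\[
 n \text{ has an admissible representation for } \rho \iff n/4 \text{ has one for } \rho/2 ,
\]
so the four-square exceptional set for $\rho$ equals $4$ times that for $\rho/2$ (intersected with $\{8\mid n\}$, which is automatic since every relevant exceptional value is $\equiv0\pmod8$). Because $4(\rho/2)^2=\rho^2$ and $4(\rho/4)^2=(\rho/2)^2$ carry the two families of $N(\rho/2)$ exactly onto the two families of $N(\rho)$, the reduction propagates the exceptional set precisely. Taking the base case $\rho=4$, whose complete list of $n$ with $k_4(n)=5$ was determined in Section~2, one checks $4\cdot\{24,32,56,88,96,120,160,224,480,736\}=N(8)$, and for $a\ge3$ the identity $4\,N(2^{a})=N(2^{a+1})$; induction then gives that the four-square exceptional set for $\rho=2^{a+1}$ is precisely $N(\rho)$, \emph{provided} the cases not covered by the second and third reductions, namely $n\not\equiv0\pmod4$, are shown to be admissible.

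The crux is therefore the claim that every $n\not\equiv0\pmod4$ has an admissible representation. By the first reduction we may assume $\rho^2\le n$, and by Theorem~\ref{550} we may assume $n<550\rho^2$. The idea is to detach one square $s^2$ with $s\neq\rho$ so that $m=n-s^2$ is a sum of three squares with all roots $\neq\rho$; the convenient sufficient condition is $m<\rho^2$, which forces the three roots below $\rho$ automatically. Since $n\not\equiv0\pmod4$, one can fix the residue of $s$ so that $m\not\equiv7\pmod8$ and $m\not\equiv0\pmod4$, making $m$ a sum of three squares by the three-square theorem, while taking $s$ just below $\sqrt n$ keeps $m$ small. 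As $s$ runs over the integers in $(\sqrt{n-\rho^2},\sqrt n\,]$ the remainders $m=n-s^2$ sweep $[0,\rho^2)$ in steps of about $2\sqrt n$, and one argues that some admissible $s$ in this window produces a three-square $m$.

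The main obstacle is exactly this middle range, and within it the genuinely tight subcase $n\approx4\rho^2$ (equivalently $\lfloor\sqrt n/2\rfloor\approx\rho$): here the square one would naturally detach is close to $\rho^2$, so the window $(\sqrt{n-\rho^2},\sqrt n\,]$ is short and the ``$m<\rho^2$'' trick can fail, while the roots produced by the shifted construction in Theorem~\ref{550} cluster near $\rho$ itself. I expect to handle this by exploiting the freedom in the three- (or four-) square representation to dodge the single forbidden value $\rho$, supplemented by a direct finite verification for the few small exponents (such as $\rho=8,16,32,64$) where the window is too short for the counting argument to apply uniformly. Finally, the ``furthermore'' follows from the same machinery: if $m\in N(\rho/2)$ is a sum of five squares avoiding $\rho/2$ (true at the base $\rho=4$ by the Section~2 computation, and inductively thereafter), then doubling all roots exhibits $4m\in N(\rho)$ as a sum of five squares avoiding $\rho$; hence every element of $N(\rho)$ is a sum of five admissible squares, and $M(\rho)=5$.
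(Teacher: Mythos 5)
Your overall architecture is sound and in places cleaner than the paper's: the observation that $8\mid n$ forces all four roots even, giving the exact transport $4\,N(\rho/2)=N(\rho)$ by induction from the $\rho=4$ data of Section~2, is correct (I checked that $4\cdot\{24,32,56,88,96,120,160,224,480,736\}=N(8)$ and that every element of every $N(\rho)$ is divisible by $8$), and the $n\equiv4\pmod 8$ case via four odd squares is the same device the paper uses. The ``furthermore'' part also propagates correctly by doubling roots. The paper instead handles $8\mid n$ by writing $n=2^{2s}t$ and splitting on the size of $s$ relative to $a$, which amounts to the same reduction.

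The genuine gap is the crux you yourself flag: the claim that every $n\not\equiv0\pmod4$ has a representation by four squares avoiding $\rho$ is never actually proved. You describe a sliding-window strategy and then write ``one argues that some admissible $s$ in this window produces a three-square $m$'' and ``I expect to handle this by exploiting the freedom\dots supplemented by a direct finite verification,'' which is a plan, not an argument. The strategy can almost certainly be closed, but the details matter and are absent: for each residue of $n$ modulo $8$ one must fix a residue class for $s$ modulo $4$ (e.g.\ $s\equiv2\pmod4$ when $n\equiv1,5,7\pmod 8$, $s$ odd when $n\equiv2\pmod4$, and $n\equiv3\pmod8$ needs no detachment at all) so that $m=n-s^2$ is guaranteed by the three-square theorem to be a sum of three squares and $s\ne\rho$ is automatic; one must then verify that the interval $(\sqrt{n-\rho^2},\sqrt n\,]$, whose length is at least $\rho^2/(2\sqrt n)\ge\rho/(2\sqrt{550})$ on the range $\rho^2\le n<550\rho^2$ left over from Theorem~\ref{550}, contains an integer in that class --- which holds only once $\rho\ge 8\sqrt{550}$, i.e.\ $a\ge7$ --- and one must then actually perform the finite verification for $2\le a\le 6$ (the paper does exactly this via Theorem~\ref{550}). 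Incidentally, your stated worry about $n\approx4\rho^2$ is misplaced: there the window has length about $(2-\sqrt3)\rho$; the tight regime is $n$ near the cutoff $550\rho^2$. Note also that the paper resolves this same crux quite differently: it detaches the fixed squares $2^2$, $4^2$, or $16^2$, and when the forbidden root $\rho$ appears in the three-square remainder it repairs the representation with explicit identities such as $2^{2a+2}+4=(2^{a+1}-2)^2+(2^{(a+3)/2})^2$ together with a count of representations by sums of two squares; your ``force $m<\rho^2$'' device would avoid those identities entirely, but until the window argument is written out with the residue bookkeeping and the small-$a$ check, the theorem is not proved.
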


\begin{proof} If the integer $a$ is less than or equal to $6$,  then one may prove the theorem by using  Theorem~\ref{550}.  Hence we always assume that $a\ge 7$.
By Lagrange's four square theorem, we may further assume that $n \ge 2^{16}$.

 First, assume that $n \equiv 1 \Mod 4$. Since the other case can be done in a similar manner, we further assume that $n \equiv 1 \Mod 8$. Assume that $a$ is odd. 
Since $n-2^2$ is a sum of three squares of integers, there are nonnegative integers $x,y,z$  with $x \equiv 1 \Mod 2, \ y \equiv 2 \Mod 4$, and $z\equiv 0 \Mod 4$ such that
$$
n-2^2=x^2+y^2+z^2.
$$ 
If $z \ne \rho$, then $n$ is a sum of $4$ squares in $S_{\rho}$. If $z=\rho$, then 
$$
n=x^2+y^2+2^{2a+2}+4=x^2+y^2+(2^{a+1}-2)^2+(2^{\frac{a+3}2})^2,
$$ 
which is desired. 

Now, assume that $a$ is even. Similarly to the above, there are nonnegative integers $x,y,z$ with 
$$
x \equiv 1 \Mod 2, \ y \equiv z \equiv 2 \Mod 4, \  \text{or}  \ x \equiv 1 \Mod 2, \ y \equiv z \equiv 0 \Mod 4
$$
such that
$$
n-4^2=x^2+y^2+z^2.
$$ 
If neither $y$ nor $z$  is equal to $\rho$, then $n$ is a sum of $4$ squares in $S_{\rho}$. If exactly one of $y$ and $z$, say $z$,  is equal to $\rho$, then 
$$
n=x^2+y^2+2^{2a+2}+4^2=x^2+y^2+(2^{a+1}-4)^2+(2^{\frac{a+4}2})^2,
$$ 
which is desired. Assume that both $y$ and $z$ are equal to $\rho$, that is, $n=x^2+2^{2a+3}+4^2$. By the similar reasoning to the above, we may assume that there is an integer $x_1$ such that $n=x_1^2+2^{2a+3}+16^2$.  By letting $a=2b$, note that 
$$
n=\left\{\begin{aligned}
  &\ x^2+(2^{2b+1}-4)^2+2^{2b+4}(2^{2b-2}+1), \\
  &\ x_1^2+(2^{2b+1}-16)^2+2^{2b+6}(2^{2b-4}+1). \\
\end{aligned}\right.
$$
Since $b \ge 4$, at least one of $2b-2$ or $2b-4$, say $2b-2$, has an odd prime factor. Hence $2^{2b-2}+1$ is not a prime and any prime dividing it is always congruent to $1$ modulo $4$.
Since the number of representations of the integer $2^{2b-2}+1$ by a sum of $2$ squares is equal to $4\sum_{d\mid 2^{2b-2}+1} \left( \frac {-1}d\right)$, there are integers $s,t$ different from $2^{b-1}$ such that
$$
2^{2b-2}+1=s^2+t^2 \ \text{and} \ n= x^2+(2^{2b+1}-4)^2+(2^{b+2}s)^2+(2^{b+2}t)^2.
$$
 Therefore $n$ is a sum of $4$ squares of integers in $S_{\rho}$. 

If $n \equiv 3+\epsilon^2 \Mod 8$ for $\epsilon=0$ or $2$, then there are odd integers $a,b$, and $c$ such that
$$
n=\epsilon^2+a^2+b^2+c^2.
$$
The theorem follows directly from this.   

Now, assume that $n \equiv 2 \Mod 4$. Then there are nonnegative integers $x_i,y_i,$ and $z_i$ for $i=1,2$ with $x_i \equiv y_i \equiv 1\Mod 2$ and $z_i \equiv 0\Mod 2$ such that 
$$
n=x_1^2+y_1^2+z_1^2+4=x_2^2+y_2^2+z_2^2+16.
$$
If $z_1$ or $z_2$ is not equal to $\rho$, we are done. If $z_1=z_2=2^{a+1}$, then we have 
$$
n=\left\lbrace\begin{aligned} 
&\ x_1^2+y_1^2+(2^{a+1}-2)^2+2^{a+3}, \\
&\ x_2^2+y_2^2+(2^{a+1}-4)^2+2^{a+4}. \\
\end{aligned}\right.
$$
 In this case, the theorem follows directly from the fact that either $a+3$ or $a+4$ is even and $2a+2>a+4$.  
 
If $n\equiv 4 \Mod 8$, then $n$ is a sum of $4$ squares of odd integers. 

Finally, assume that $n$ is divisible by $8$. Let $n=2^{2s}t$, where  $t \equiv 2 \Mod 4$ or $t \equiv 4 \Mod 8$.  Assume that $s \ge a+2$. Since $t$ is a sum of $4$ squares of integers, there are integers $x,y,z$, and $w$ such that 
$$
n=2^{2s}t=(2^sx)^2+(2^sy)^2+(2^sz)^2+(2^sw)^2.
$$
Assume that $s \le a-2$. Then there are integers $x,y,z$, and $w$ such that 
$$
t=x^2+y^2+z^2+w^2, \ \text{where $x,y,z,w \in S_{2^{a+1-s}}$.}
$$
Therefore we have 
$$
n=2^{2s}t=(2^sx)^2+(2^sy)^2+(2^sz)^2+(2^sw)^2, \ \text{where $2^sx,2^sy,2^sz,2^sw \in S_{2^{a+1}}$}.
$$
Finally, assume that $s=a+1,a$, or $a-1$. Let $\epsilon=a+1-s$. If $t$ is a sum of $4$ square of integers in $S_{2^{\epsilon}}$, then $n$ is a sum of $4$ squares of integers in $S_{\rho}$.  
Note that for any integer $t$ with $t\equiv 2\Mod{4}$,$\ t\equiv 4\Mod{8}$, we have $k_{2^\epsilon}(t)>4$ if and only if $$
t = \begin{cases}
\ 2,6,10,14,30,46 & \text{if \ $\epsilon=0$,}\\
\ 6,14,22,30 & \text{if \ $\epsilon=1$,}
\end{cases}
$$
and there is no integer $t$ such that $k_4(t)>4$. For any integer $n=t\cdot \left(\frac {\rho}{2^{\epsilon}}\right)^2$ contained in
\[
N(\rho)=\left\lbrace2\rho^2\!, 6\rho^2\!, 10\rho^2\!, 14\rho^2\!, 30\rho^2\!, 46\rho^2\!, 6\left(\frac{\rho}{2}\right)^2\!\!,  14\left(\frac{\rho}{2}\right)^2\!\!,  22\left(\frac{\rho}{2}\right)^2\!\!, 30\left(\frac{\rho}{2}\right)^2 \right\rbrace\text,
\] 
one may easily  compute that $k_{\rho}(n)=5$. This completes the proof.
\end{proof}

\begin{thm} Let $a$ be a nonnegative integer and let $\rho=3\cdot2^{a+1}$. Then any positive integer $n$ except for $14\cdot2^{2a+2}$ is a sum of $4$ squares of integers in $S_{\rho}$. Furthermore, $14\cdot2^{2a+2}$ is a sum of $5$ squares of integers in $S_{\rho}$ and hence $M(\rho)=5$. 
\end{thm}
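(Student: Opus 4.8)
The plan is to follow the architecture of the proof of Theorem~\ref{3.2} almost verbatim, replacing the forbidden value $2^{a+1}$ by $\rho=3\cdot2^{a+1}$. First I would reduce to the generic range. For $a$ below a fixed bound $\rho$ is bounded, so Theorem~\ref{550} already handles all $n\ge550\rho^2$ and the finitely many smaller $n$ are checked by machine; hence one may assume $a$ is large (say $a\ge7$). Moreover, if $n<\rho^2$ then every entry of a Lagrange representation of $n$ is automatically smaller than $\rho$, so such $n$ lie in $S_\rho$ and we may also assume $n$ is large. The structural fact that makes the argument run is that $\rho=3\cdot2^{a+1}$ has no prime divisor $\equiv1\pmod4$, so $\rho^2$ is \emph{not} a nontrivial sum of two squares; however the factor $3$ forces $6\rho=9\cdot2^{a+2}$ and $12\rho=9\cdot2^{a+3}$ to be $9$ times a power of $2$, hence perfect squares exactly when the exponent is even.

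For $n\not\equiv0\pmod4$ (treated in the residues $n\equiv1,2,3\pmod4$, refined mod $8$, as in Theorem~\ref{3.2}) I would write $n-c^{2}=x^{2}+y^{2}+z^{2}$ for a small $c$, invoking the three-square theorem with residues prescribed mod $4$ so that the only variable that can equal $\rho$ is the one divisible by $4$ (the odd variable and the variable $\equiv2\pmod4$ are excluded because the $2$-adic valuation of $\rho$ is $a+1\ge2$). If that variable is not $\rho$ we are finished; if it is, I replace $\rho^{2}+c^{2}$ by means of
\[
\rho^{2}+9=(\rho-3)^{2}+\bigl(3\cdot2^{(a+2)/2}\bigr)^{2}\ (a\text{ even}),\qquad
\rho^{2}+36=(\rho-6)^{2}+\bigl(3\cdot2^{(a+3)/2}\bigr)^{2}\ (a\text{ odd}),
\]
whose new entries are $\ne\rho$ for large $a$; the parity of $a$ selects which offset $c\in\{3,6\}$ makes the second term a genuine square, exactly paralleling the even/odd dichotomy of Theorem~\ref{3.2}.

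For $n\equiv0\pmod8$ I would run the descent $n=2^{2s}t$ with $t\equiv2\pmod4$ or $t\equiv4\pmod8$: a representation of $t$ by four squares in $S_{3\cdot2^{\epsilon}}$, with $\epsilon=a+1-s$, scales by $2^{s}$ to a representation of $n$ in $S_\rho$. When $\epsilon\ge2$ the forbidden value $3\cdot2^{\epsilon}\ge12$ is avoided for every admissible $t$ by the same direct analysis applied with the smaller exponent $\epsilon-1$; when $\epsilon=1$ the forbidden value is $6$ and a short check of $k_6$ shows no admissible $t$ fails; and when $\epsilon=0$ the forbidden value is $3$, so by the list of $k_3$-values computed in Section~2 the inequality $k_3(t)>4$ holds, among $t$ with $t\equiv2\pmod4$ or $t\equiv4\pmod8$, \emph{only} for $t=14$. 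This singles out the unique exception $n=14\cdot2^{2a+2}$, with $s=a+1$ and $t=14$. For this $n$, since $14$ is not a sum of four squares avoiding $3$ (every such representation uses $3$), $14\cdot2^{2a+2}$ is not a sum of four squares in $S_\rho$; but $14=2^{2}+2^{2}+2^{2}+1^{2}+1^{2}$ gives the five-square representation $14\cdot2^{2a+2}=(2^{a+2})^{2}+(2^{a+2})^{2}+(2^{a+2})^{2}+(2^{a+1})^{2}+(2^{a+1})^{2}$ in $S_\rho$, whence $k_\rho(14\cdot2^{2a+2})=5$ and $M(\rho)=5$.

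I expect the main obstacle to be the cases in which $\rho$ is \emph{forced}: producing the correct replacement identity in each residue class, and especially handling the rare sub-case in which two of the three variables simultaneously equal $\rho$, which (as in Theorem~\ref{3.2}) should be resolved by passing to a second three-square representation and using that a suitable factor of the form $2^{2b}+1$, whose odd prime divisors are all $\equiv1\pmod4$, admits a sum-of-two-squares decomposition avoiding the offending value. The remaining delicate point is the bookkeeping across all descent levels: one must confirm that the only surviving failure is $t=14$ at $\epsilon=0$, which rests squarely on the explicit computations of $k_3$ and $k_6$ recorded in Section~2.
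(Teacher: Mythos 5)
Your proposal follows essentially the same route as the paper, which itself only sketches this proof by saying "replace $n-(2^\alpha)^2$ by $n-(3\cdot2^\alpha)^2$ in the argument of Theorem~\ref{3.2}" and leaves the details as an exercise; your replacement identities $\rho^2+9=(\rho-3)^2+9\cdot2^{a+2}$ and $\rho^2+36=(\rho-6)^2+9\cdot2^{a+3}$ are the correct analogues, and your descent through $\epsilon=a+1-s$ together with the $k_3$ table from Section~2 correctly isolates $t=14$ as the sole surviving exception. The argument is sound and in fact more detailed than what the paper records.
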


\begin{proof} The method of the proof is quite similar to that of Theorem ~\ref{3.2}. One may consider $n-(3\cdot 2^\alpha)^2$ instead of $n-(2^\alpha)^2$ for $\alpha=1,2$, and $4$ in the previous theorem. Since all the other things are quite similar to the above theorem, the proof is left as an exercise to the reader.
\end{proof}

Now, we assume that $\rho=2^s\cdot3^t$, where $s\geq0$ and $t\geq2$. 

\begin{prop}\label{3pri}
   Assume that an integer $n$ is a sum of three squares  of integers. Then there exist integers $a,\ b$, and $c$ such that 
$$
n=a^2+b^2+c^2 \  \text{and}  \ \gcd(a,b,c,3)=1.
$$ 
In particular, if n is divisible by $3$, then $n$ is a sum of three squares of integers not divisible by $3$.  
\end{prop}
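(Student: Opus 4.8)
The plan is to reduce the claim to the classical theorem on \emph{primitive} representations as a sum of three squares, and then recover the ``in particular'' clause by a congruence count. I would first record that the second assertion follows formally from the first: once the main assertion supplies a representation $n=a^2+b^2+c^2$ with $\gcd(a,b,c,3)=1$, and if moreover $3\mid n$, then since every square is $\equiv 0$ or $1\pmod 3$, the number $j$ of coordinates among $a,b,c$ that are coprime to $3$ satisfies $n\equiv j\pmod 3$. Thus $j\equiv 0\pmod 3$, while $\gcd(a,b,c,3)=1$ forces $j\ge 1$; hence $j=3$ and none of $a,b,c$ is divisible by $3$.

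For the main assertion I would write $n=4^{k}m$ with $4\nmid m$. Since $n$ is a sum of three squares, the three-squares theorem gives $n\neq 4^{\alpha}(8\beta+7)$, which rules out $m\equiv 7\pmod 8$; combined with $4\nmid m$ (so $m\not\equiv 0,4\pmod 8$) this yields $m\not\equiv 0,4,7\pmod 8$. I would then invoke the classical refinement of the three-squares theorem, which guarantees that precisely such an $m$ admits a \emph{primitive} representation $m=u^2+v^2+w^2$ with $\gcd(u,v,w)=1$. Putting $a=2^{k}u$, $b=2^{k}v$, $c=2^{k}w$ gives $n=a^2+b^2+c^2$ together with
\[
\gcd(a,b,c,3)=\gcd\bigl(2^{k}\gcd(u,v,w),\,3\bigr)=\gcd(2^{k},3)=1,
\]
which is exactly the required representation.

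The whole weight of the argument rests on the existence of the primitive representation, and this is the step I expect to be the main obstacle if one wants a self-contained proof. A naive descent does not work: if $n=a^2+b^2+c^2$ with $3\mid a,b,c$, passing to $n/9$ and scaling a representation back up by $3$ only reintroduces the common factor, so one cannot lower the $3$-adic valuation of the coordinates this way. One therefore either cites the primitive-representation theorem as a known classical fact (its proof rests on Gauss's class-number formula for ternary forms), or proves it directly; once it is available, the reduction $n=4^{k}m$ above completes the argument immediately.
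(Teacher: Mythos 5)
Your proof is correct, but it reaches the conclusion by a genuinely different route from the paper. You strip off the full power of $4$, note that the remaining factor $m$ satisfies $m\not\equiv 0,4,7\pmod 8$, and then invoke Gauss's classical theorem that precisely such integers admit a representation $m=u^2+v^2+w^2$ with $\gcd(u,v,w)=1$; rescaling by $2^k$ keeps the coordinates coprime to $3$, and your congruence count mod $3$ then yields the ``in particular'' clause (this last step is the same one the paper leaves implicit). The paper instead works locally at the prime $3$: since $x^2+y^2+z^2$ is unimodular and isotropic over $\mathbb{Z}_3$, it primitively represents every $3$-adic integer, and since its class number is one, 102:5 of O'Meara upgrades any global representation of $n$ to one that is primitive at $3$. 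Both arguments delegate the hard content to the genus theory of ternary forms --- your citation of the primitive three-square theorem versus the paper's citation of O'Meara --- so neither is more self-contained; but note that you prove something slightly stronger than needed (global primitivity of the representation of $m$, rather than primitivity at $3$ alone), whereas the paper's localization isolates exactly the prime that matters and would transfer verbatim to any prime at which the form is primitively universal. Neither argument, yours or the paper's, treats $n=0$, for which the statement is vacuously problematic; this is harmless since the proposition is only ever applied to positive integers.
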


\begin{proof}
    Since  the ternary quadratic form $I_3=x^2+y^2+z^2$ is isotropic and unimodular over $\mathbb{Z}_3$, it is primitively universal over $\mathbb{Z}_3$, that is, it primitively represents all integers over $\mathbb Z_3$. Moreover, the class number of $I_3$ is $1$. So, if $n$ is represented by $I_3$ over $\mathbb{Z}$, then there exists a  representation of $n$ by $I_3$ over $\mathbb{Z}$ which is a primitive representation over $\mathbb Z_3$ by 102.5 of  \cite{OM}.
  Therefore if an integer $n$ divisible by $3$ is represented by $I_3$, then there are integers $a,b$, and $c$ such that 
   $$
   n=a^2+b^2+c^2 \ \text{and} \ abc \not \equiv 0 \Mod 3. 
   $$
   This completes the proof.
\end{proof}

\begin{prop}\label{2mod3}
    Let $a_0,\ a_1$, and $a_2$ be nonzero integers such that $a_0 \equiv 0 \Mod 9$, $a_1 \equiv 1 \Mod 3$, and $a_2\equiv 2\Mod 3$. Then
    there are integers $b_0,\ b_1$, and $b_2$ such that 
    $$
    b_0^2 + b_1^2 + b_2^2=a_0^2 + a_1^2 + a_2^2  \
      \text{and} 
    \ \pm a_0 \notin \{b_0, b_1, b_2\}.
    $$
\end{prop}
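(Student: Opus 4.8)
The plan is to exploit the congruence $n := a_0^2 + a_1^2 + a_2^2 \equiv 2 \pmod 3$. Since a square is $\equiv 0$ or $1 \pmod 3$, the only way three of them can sum to $2$ is $0+1+1$; hence in \emph{every} representation of $n$ as a sum of three integer squares exactly one summand is divisible by $3$ and the other two are coprime to $3$. In the given representation this distinguished summand is $a_0$, which by hypothesis is divisible by $9$. I would therefore reduce the proposition to the claim that $n$ admits a representation $n = b_0^2 + b_1^2 + b_2^2$ whose unique divisible-by-$3$ summand, say $b_0$, has $v_3(b_0)=1$, i.e.\ is divisible by $3$ but not by $9$. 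Granting this, $b_0 \not\equiv 0 \pmod 9$ forces $b_0 \neq \pm a_0$, while $b_1,b_2$ are coprime to $3$ and hence automatically different from $\pm a_0$; thus $\pm a_0 \notin \{b_0,b_1,b_2\}$.

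I would next establish this claim $3$-adically. As $3 \nmid n$, the number $n$ is a unit of $\mathbb{Z}_3$, and for any $u \in \mathbb{Z}_3^\times$ the element $n - 9u^2$ is again a unit $\equiv 2 \pmod 3$. Because $-1$ is not a square in $\mathbb{Q}_3$, the form $x^2+y^2$ is the norm form of the unramified quadratic extension $\mathbb{Q}_3(\sqrt{-1})$ and so represents every unit of $\mathbb{Z}_3$; a unit $\equiv 2 \pmod 3$ is moreover represented with both variables units. Hence over $\mathbb{Z}_3$ one may write $n = (3u)^2 + b_1^2 + b_2^2$ with $b_1,b_2 \in \mathbb{Z}_3^\times$, a local representation of exactly the valuation type sought. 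This locus is cut out by open conditions and contains a nonsingular point, so it carries positive $3$-adic density.

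Finally I would globalize, the engine being, exactly as in the proof of Proposition~\ref{3pri}, that $I_3 = x^2+y^2+z^2$ has class number one. Since $n$ is a sum of three squares it is represented by $I_3$ over every $\mathbb{Z}_p$ and over $\mathbb{R}$, so all local densities are positive; by Siegel's mass formula, which is exact for a genus of class number one, the representation number of $n$ factors as a product of local densities, and this factorization persists after imposing the local condition $v_3(b_0)=1$ (with $b_1,b_2$ units) at the prime $3$. As that condition has positive $3$-adic density while the remaining local factors are unchanged and positive, the number of integral representations of $n$ with $v_3(b_0)=1$ is positive, and any such representation proves the claim.

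The main obstacle is precisely this last step. Because $3 \nmid n$, every representation of $n$ is automatically primitive, so the tool used for Proposition~\ref{3pri} (O'Meara 102:5, controlling primitivity at $3$) yields nothing new here: primitivity cannot separate the valuation-$1$ type from the valuation-$\geq 2$ type, and the local group $O_3(\mathbb{Z}_3)$ mixes these strata, while strong approximation is unavailable since $I_3$ is anisotropic at the archimedean place. One must therefore upgrade to control of the representation modulo $9$, and the delicate point is to justify that the exact Siegel formula may be stratified by the valuation-$1$ condition at $3$ without disturbing the other factors. A more computational alternative would replace this by an explicit norm-$3$ ternary rotation $\tfrac{1}{3}M$, with $M \in M_3(\mathbb{Z})$ and $M^{\mathsf T}M = 9I$, applied to $(a_0,a_1,a_2)$ after matching signs so the image is integral; one then checks by a finite analysis of $a_1,a_2$ modulo $9$ that a suitable rotation lowers the valuation of the distinguished coordinate to $1$. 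The price of that route is a case analysis ruling out the rotations that merely return a signed permutation of $(a_0,a_1,a_2)$.
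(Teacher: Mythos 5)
Your reduction of the proposition to the claim ``$n$ has a representation whose $3$-divisible coordinate has $3$-adic valuation exactly $1$'' is sensible, and your local analysis at $3$ is correct, but the globalization step --- the heart of the argument --- is missing, and as written it would fail. The class-number-one property of $I_3$ (or the exact Siegel formula) lets you count representations lying in a prescribed \emph{$O(I_3)(\mathbb{Z}_p)$-invariant} union of local strata; but, as you yourself observe, the condition $v_3(b_0)=1$ is \emph{not} invariant under $O(I_3)(\mathbb{Z}_3)$, which mixes the valuation-$1$ and valuation-$\ge 2$ types. So the ``stratified'' product formula you invoke is not a theorem, and there is no identified mechanism to produce a global representation in the desired stratum. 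Making the divisibility condition invariant amounts to counting representations by a sublattice such as $3\mathbb{Z}\perp\mathbb{Z}\perp\mathbb{Z}\cong\langle 9,1,1\rangle$, whose genus has class number greater than $1$, so representability there is genuinely not a local question. That this cannot be purely local is also visible from small cases: $n=2$ and $n=5$ are $\equiv 2\pmod 3$ and sums of three squares, yet their only representations are $0^2+1^2+1^2$ and $0^2+1^2+2^2$, with no coordinate of valuation $1$. (These $n$ are excluded by the hypothesis $a_0\equiv 0\pmod 9$, $a_0\ne 0$, which forces $n\ge 86$, but they show your claim would need an effective argument, not just positive local densities.) The alternative ``norm-$3$ rotation'' route you sketch is closer to something workable, but you leave the decisive case analysis undone.

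For comparison, the paper's proof is entirely elementary and avoids all of this. Writing $a_0=3A$ (so $A\equiv 0\pmod 3$) and $m=(a_0+a_1+a_2)/3$, $m'=(-a_0+a_1+a_2)/3=m-2A$, it uses the reflection identity $x^2+y^2+z^2=(2m-x)^2+(2m-y)^2+(2m-z)^2$ valid whenever $x+y+z=3m$, applied once to $(a_0,a_1,a_2)$ and once to $(-a_0,a_1,a_2)$. A computation modulo $3$ shows only one coordinate of each new triple can be $\pm a_0$, and comparing the two candidate triples shows they cannot both fail unless $A=0$, i.e.\ $a_0=0$. You may want to look for an identity-based argument of this kind rather than pushing the local-global machinery, which here does not have enough invariance to close the gap.
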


\begin{proof}
If we let $a_0 = 3A$, then $A\equiv 0\Mod3$. Clearly, both $m = (a_0 + a_1 + a_2)/3$ and $m' = (-a_0 + a_1 + a_2)/3 = m - 2A$ are integers. Note that
\begin{align*}
 a_0^2 + a_1^2 + a_2^2 & = (2m-a_0)^2 + (2m-a_1)^2 + (2m-a_2)^2,\\
  (-a_0)^2 + a_1^2 + a_2^2 & = (2m'+a_0)^2 + (2m'-a_1)^2 + (2m'-a_2)^2\text.
\end{align*}
Define $b_i = 2m - a_i$ for $0\le i\le 2$, and
\[
 b'_0 = 2m' + a_0 \text,\quad b'_1 = 2m' - a_1 \text,\quad b'_2 = 2m' - a_2\text.
\]
If $\pm a_0 \notin \{b_0, b_1, b_2\}$, then we are done. Suppose that $ua_0 = b_i$ for some $u\in \{\pm 1\}$ and $0\le i\le 2$. We claim that $\pm a_0 \notin \{b'_0, b'_1, b'_2\}$. Let $\{i, j, k\} = \{0, 1, 2\}$. Since $m \equiv m' \sMod3$, we have
\[
 b_0 \equiv b'_0 \not\equiv b_1 \equiv b'_1 \not\equiv b_2 \equiv b'_2 \not\equiv b_0 \Mod3\text.
\]
Hence neither $b'_j$ nor $b'_k$ is equal to $\pm a_0$. Now, suppose that $b'_i =\pm a_0$ for a suitable sign. Then we have
\[
 b'_i - b_i = \pm a_0 - ua_0 \in \{0, \pm 2a_0\} = \{0, \pm 6A\}\text.
\]
However, $b'_1 - b_1 = b'_2 - b_2 = 2m' - 2m = -4A$ and $b'_0 - b_0 = 2m' + a_0 - 2m + a_0 = 2A$, which is absurd. This proves the proposition.
\end{proof}

\begin{thm} \label{3.6}
   If $\rho$ is divisible by $9$, then $M(\rho)=4$. 
\end{thm}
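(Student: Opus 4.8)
The plan is to prove the equivalent statement that every positive integer $n$ is a sum of at most four squares of integers different from $\rho$; since $7=1^2+1^2+1^2+2^2$ forces $k_\rho(7)=4$ while none of these terms can equal $\rho\ge 9$, this will give $M(\rho)=4$. Two cheap reductions come first. If $n<\rho^2$, then every term in any four-square representation is smaller than $\rho$, so Lagrange's theorem already avoids $\rho$; hence I may assume $n\ge\rho^2$. If $4\mid n$, write $n=4^e u$ with $4\nmid u$ and $\rho=2^s3^t$. When $e>s$, no scaled value $2^e w$ can equal $\rho$, so scaling a Lagrange representation of $u$ finishes the case; when $e\le s$, I am reduced to representing $u$ while avoiding the rescaled obstruction $\rho/2^e=2^{s-e}3^t$, which is still divisible by $9$. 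Thus I may assume throughout that $4\nmid n$ and that the forbidden value, which I continue to call $\rho$, is divisible by $9$.

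With $4\nmid n$, the three squares theorem says $n=x^2+y^2+z^2$ is solvable unless $n\equiv 7\pmod{8}$. The strategy is to produce a three-square expression --- either for $n$ itself, or for $n-d^2$ after peeling off one small square $d^2$ with $d\ne\rho$ --- whose remaining part is divisible by $3$ or congruent to $2$ modulo $3$, and then to eliminate $\rho$ using the two propositions, the choice being dictated by $n\bmod 3$. If $n\equiv 0\pmod{3}$, Proposition~\ref{3pri} rewrites $n$ as a sum of three squares none divisible by $3$, and since $\rho\equiv 0\pmod{3}$ none of them is $\rho$. If $n\equiv 2\pmod{3}$, a three-square representation of $n$ has exactly one term divisible by $3$; should that term equal $\rho$ (necessarily with the other two nonzero and, after adjusting signs, congruent to $1$ and $2$ modulo $3$), Proposition~\ref{2mod3} --- whose hypothesis $a_0\equiv 0\pmod{9}$ is exactly where $9\mid\rho$ enters --- replaces the representation by one avoiding $\pm\rho$.

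The class $n\equiv 1\pmod{3}$ is the one where neither proposition applies to $n$ directly, because every three-square representation of such an $n$ has two terms divisible by $3$, either of which might be $\rho$. Here I subtract a square instead: choosing $d\not\equiv 0\pmod{3}$ makes $n-d^2\equiv 0\pmod{3}$, and Proposition~\ref{3pri} then represents $n-d^2$ by three squares free of multiples of $3$, so $n=d^2+(\text{those three squares})$ avoids $\rho$ (note $d\ne\rho$ automatically, as $\rho\equiv 0\pmod{3}$). The same peeling resolves the case $n\equiv 7\pmod{8}$, which is not itself a sum of three squares: one subtracts $d^2$ with $d^2\equiv 1$ or $4\pmod{8}$, so that $n-d^2\equiv 6$ or $3\pmod{8}$ becomes a sum of three squares, while choosing the residue of $d$ modulo $3$ (taking $d=1$ when $n\equiv 1$, and $d=3$ when $n\equiv 0$ or $2$ modulo $3$) to land in the $3\mid(n-d^2)$ or the $n-d^2\equiv 2\pmod{3}$ regime handled above. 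A short table over $n\bmod 8$ confirms that when $n\equiv 1\pmod 3$ and $n\not\equiv 7\pmod 8$, some $d\in\{1,2\}$ always makes $n-d^2$ a sum of three squares divisible by $3$.

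The main obstacle I anticipate is bookkeeping rather than conceptual: I must check, uniformly over the residues of $n$ modulo $8$ and modulo $3$, that a small $d$ exists which simultaneously (i) lies in the residue class modulo $3$ forcing $n-d^2$ into the $3$-divisible or $2$-mod-$3$ regime, (ii) has $d^2$ in a residue modulo $8$ making $n-d^2$ a genuine sum of three squares, and (iii) differs from $\rho$. Condition (iii) is where divisibility by $9$ does the decisive work a second time: the only small multiples of $3$ I ever subtract are $d=3$ (and, in the even analogue, $d=6$), and since $\rho$ is divisible by $9$ these never coincide with $\rho$ --- precisely what fails when $3\,\|\,\rho$, which is why that case yields $M(\rho)=5$ instead. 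Assembling these choices, together with the reduction to $4\nmid n$ and the trivial range $n<\rho^2$, completes the proof.
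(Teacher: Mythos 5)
Your proposal is correct and rests on the same two pillars as the paper's own proof --- Propositions~\ref{3pri} and~\ref{2mod3} combined with a case split on $n$ modulo $3$ --- so the approach is essentially the same. The only divergence is mechanical: the paper starts from a Lagrange four-square representation and swaps out offending terms (applying Proposition~\ref{2mod3} twice when two of the four terms equal $\pm\rho$), whereas you first reduce to $4\nmid n$ by scaling and then peel an explicit $d\in\{1,2,3\}$ off a three-square representation; both routes go through.
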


\begin{proof}
    Let $n$ be any positive integer. Then there are integers $a,b,c$, and $d$ such that 
$$
n=a^2+b^2+c^2+d^2,
$$ 
by Lagrange's four square theorem.
   
 Assume that $n\equiv0\Mod3 $. Then all of $a,b,c$, and $d$ are divisible by $3$ or exactly one of them is divisible by $3$. If $\lvert a \rvert=\lvert b\rvert=\lvert c \rvert= \lvert d \rvert=\rho$, then $n=(2\rho)^2+0^2+0^2+0^2$. So we may suppose that at least one of them is not $\rho$. Without loss of generality, we assume that  $\lvert d\rvert\neq\rho$. Then $n-d^2\equiv 0,2 \Mod{3}$ is a sum of three squares. Hence, by Propositions~\ref{3pri} and ~\ref{2mod3}, $n-d^2$ is a sum of three squares of integers in $S_\rho$.
    
   Assume that  $n\equiv1\Mod3 $. Then we may assume $a\not\equiv0\Mod3 $. Since $n-a^2\equiv0\Mod3 $, $n-a^2$ is a sum of three squares not divisible by $3$ by Proposition~\ref{3pri}, which is desired.
    
    Finally, assume that  $n\equiv2\Mod3 $. Then exactly two of $a,b,c,$ and $d$ are divisible by $3$. Without loss of generality,
    we assume that $a,b\equiv0\Mod3$. If neither $\lvert a \rvert$ nor $\lvert b \rvert$ is equal to $\rho$, then we are done.
    If $\lvert a \rvert=\rho$ and $\lvert b \rvert\neq \rho$, then by Proposition~\ref{2mod3}, there are integers $a', c'$ and $d'$ such that  
    $$
    n-b^2=(a')^2+(c')^2+(d')^2,
    $$ 
    where all of $\lvert a' \rvert$, $\lvert c' \rvert$, and $\lvert d' \rvert$ are not equal to $\rho$. Assume that $\lvert a \rvert= \lvert b \rvert=\rho$. Then by Proposition~\ref{2mod3}, there are integers
    $a', c'$, and $d'$ such that 
    $$
    n-b^2=(a')^2+(c')^2+(d')^2,
    $$ 
    where all of $\lvert a' \rvert$, $\lvert c' \rvert$, and $\lvert d' \rvert$ are not equal to $\rho$. We may assume that $a'$ is divisible by $3$. Then by Proposition~\ref{2mod3} again, there are integers $b', c''$, and $d''$ such that
    $$
    n-(a')^2=(b')^2+(c'')^2+(d'')^2,
    $$
    where all of $\lvert b' \rvert, \lvert c'' \rvert$, and $\lvert d'' \rvert$  are not equal to $\rho$. The theorem follows directly from this.
\end{proof}

\end{document}